\newtheorem{theorem}{Theorem}
\newtheorem{lemma}{Lemma}[section]
\newtheorem{corollary}{Corollary}[section]
\newtheorem{definition}{Definition}[section]
\theoremstyle{definition}
\newtheorem*{ack}{Acknowledgments}
\theoremstyle{remark}
\newtheorem{remark}{Remark}[section]
\numberwithin{equation}{section}
\DeclareMathOperator{\arccot}{arccot}
\DeclareMathOperator{\arccoth}{arccoth}
\DeclareMathOperator{\arccosh}{arccosh}
\DeclareMathOperator{\normc}{n}
\DeclareMathOperator{\dist}{dist}
\title[Some sharp estimates for convex hypersurfaces]{Some sharp estimates for convex hypersurfaces of pinched normal curvature}
\author{Kostiantyn Drach}
\address{Geometry Department \\ V.N. Karazin Kharkiv National University\\ Svobody Sq. 4, 61022, Kharkiv \\ Ukraine}
\address{Department of Mathematical Analysis and Optimization \\ Sumy State University \\Rimskogo~- Korsakova str. 2, 40007, Sumy \\ Ukraine}
\email{drach@karazin.ua, kostya.drach@gmail.com}
\keywords{convex hypersurface, spaces of constant curvature, pinched normal curvature, $\lambda$-convexity, spherical shell}
\subjclass[2010]{53C40}
\begin{document}

\maketitle

\begin{abstract}
For a convex domain $D$ bounded by the hypersurface $\partial D$ in a space of constant curvature we give sharp bounds on the width $R-r$ of a spherical shell with radii $R$ and $r$ that can enclose $\partial D$, provided that normal curvatures of $\partial D$ are pinched by two positive constants. Furthermore, in the Euclidean case we also present sharp estimates for the quotient $R/r$. From the obtained estimates we derive stability results for almost umbilical hypersurfaces in the constant curvature spaces.
\end{abstract}


\section{Preliminaries and the main results}


In~\cite{BorMiq1} A.~Borisenko and V.~Miquel proved that a closed hypersurface with normal curvatures $k_{\normc}$ satisfying the inequality $k_{\normc} \geqslant 1$  in the Lobachevsky space $\mathbb H^m(-1)$ can be put into a spherical shell between two concentric spheres of radii $R$ and $r$ such that the \textit{width} $R-r$ of the shell satisfies $R-r \leqslant \ln 2$. A similar estimate holds in Hadamard manifolds (see~\cite{BorMiq2}). In~\cite{BorDr2} these results were extended to Riemannian manifolds of constant-signed sectional curvatures and hypersurfaces with normal curvatures bounded below.

In the present paper we refine some results from~\cite{BorDr2}. For this purpose we consider hypersurfaces with normal curvatures at any point and in any direction pinched by two positive constants. Such restriction allows us to obtain sharper estimates for the width $R-r$ than in~\cite{BorDr2} (see~\cite{RSch} for some related results). Furthermore, for surfaces of pinched normal curvature we are able to derive an upper bound on the quotient $R/r$, which, in contrast, can be arbitrarily large for a hypersurface with normal curvatures just bounded below.

Besides deriving sharp estimates for $R-r$ and $R/r$, the normal curvature pinching condition enables us to obtain accurate stability results for the so-called \textit{almost umbilical} hypersurfaces, that is hypersurfaces whose normal curvatures are pinched between $\kappa$ and $(1+\varepsilon) \kappa$, in constant curvature spaces. These results are the best possible for such a uniform pinching (see~\cite{Gr} for a weaker Euclidean version of the result; for the results on stability of almost umbilical hypersurfaces with pointwise principal curvatures pinching see~\cite[p. 493]{Pog}~\cite{Lei}; for stability with some integral pinching conditions see \cite{Dis, JSch}).

Let us denote by $\mathbb M^m (c)$ with $m \geqslant 2$ a complete simply connected $m$-dimensional Riemannian manifold of constant sectional curvature equal to $c$. In order to state the main results, we need the following definition. 

\begin{definition}
A hypersurface $F \subset \mathbb M^{m}(c)$  is said to be \emph{$\kappa_1,\kappa_2$-convex}  (with $\kappa_2 \geqslant \kappa_1$, and for $c = 0$ we assume that  $\kappa_1 >0$, for $c > 0$ we assume that $\kappa_1 \geqslant 0$, and for $c < 0$ we assume that $\kappa_1 > \sqrt{-c}$), if for any point $P \in F$ there exist two nested geodesic spheres $S_2 \subset S_1 \subset \mathbb M^{m} (c)$ of constant normal curvatures equal to, respectively, $\kappa_1$ and $\kappa_2$, passing through $P$ such that locally near $P$ the hypersurface $F$ lies inside $S_1$ and outside $S_2$.  
\end{definition}

Observe that $\kappa_1,\kappa_2$-convex hypersurfaces are, in particular, $\kappa_1$-convex (see~\cite{BorMiq2} and~\cite{BorDr2}). The usual notion of convexity can be viewed as $0,\infty$-convexity assuming that $S_2$ in this case is a point and $S_1$ is a totally geodesic hyperplane.

We should note that for $C^r$-smooth hypersurfaces with $r \geqslant 2$ the property of being $\kappa_1,\kappa_2$-convex is equivalent to that all its normal curvatures $k_{\normc}$ (or, equivalently, second fundamental forms) with respect to the inner normal vector field are $\kappa_1,\kappa_2$-pinched, that is $\kappa_1 \leqslant k_{\normc} \leqslant \kappa_2$. In general, since some small neighborhood of any point $P$ on a $\kappa_1,\kappa_2$-convex hypersurface $F$ lies between two tangent at $P$ geodesic spheres, we have that $F$ is $C^{1,1}$-smooth (see~\cite{Wal}). Therefore, by generalized Rademacher's theorem (see again~\cite{Wal}) at almost all its points a $\kappa_1,\kappa_2$-convex hypersurfaces has well-defined normal curvatures (second fundamental forms) satisfying the inequality shown above.

A closed domain $D \subset \mathbb M^{m}(c)$ is called \textit{$\kappa_1,\kappa_2$-convex} if its boundary $\partial D$ is a $\kappa_1,\kappa_2$-convex hypersurface. Such domains are homeomorphic to geodesic balls of the corresponding spaces. 

We recall that for $\kappa_1,\kappa_2$-convex domains well-known \textit{Blaschke's rolling theorem} holds (see~\cite{Bla}, \cite{Kar}, and~\cite{How} for a smooth case, and~\cite{M},~\cite{BorDr3} for a general case). More precisely, it states the following. Suppose $D \subset \mathbb M^m(c)$ is a $\kappa_1,\kappa_2$-convex domain; for any point $P \in \partial D$ let $S_1$ and $S_2$ be two nested spheres of normal curvature equal to, respectively, $\kappa_1$ and $\kappa_2$, and that are tangent to $\partial D$ at $P$; then $B_2 \subseteq D \subseteq B_1$, where $B_i$ is the closed geodesic ball bounded by $S_i$, $i \in \{1,2\}$. 

We are now ready to state the main results of the paper.

\begin{theorem}
\label{th1}
If $D \subset \mathbb M^{m}(c)$ is a $\kappa_1,\kappa_2$-convex domain, then the hypersurface $\partial D$ can be put into a spherical shell between two concentric spheres of radii $R$ and $r$ (with $R \geqslant r$) such that
\begin{enumerate}
\item
for $c = 0$,
\begin{equation}
\label{euclweq}
R-r \leqslant \left(\sqrt{2} - 1\right) \left(R_1 - R_2\right);
\end{equation}
\item
for $c = k^2$ with $k > 0$,
\begin{equation}
\label{spweq}
R-r \leqslant \frac{2}{k}\arccos \sqrt{\cos\left(k (R_1 - R_2)\right)} - (R_1 - R_2);
\end{equation}
\item
for $c = -k^2$ with $k > 0$,
\begin{equation}
\label{hypweq}
R-r \leqslant \frac{2}{k}\arccosh \sqrt{\cosh\left(k (R_1 - R_2)\right)} - (R_1 - R_2),
\end{equation}
\end{enumerate}
where $R_1$ and $R_2$ are the radii of circles with geodesic curvatures equal to, respectively, $\kappa_1$ and $\kappa_2$, and lying in the corresponding $2$-planes $\mathbb M^2(c)$. 

Moreover, these estimates are sharp.
\end{theorem}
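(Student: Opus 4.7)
The plan is to choose as the centre of the promised shell the centre $O$ of an inscribed geodesic ball $B_r(O)\subset D$ of maximal radius $r$ and to set the two shell radii to be $r$ itself and $R:=\max_{P\in\partial D}d(O,P)$. Blaschke's rolling theorem applied at every boundary point immediately yields $R_2\leq r\leq R_1$: the lower bound because each boundary point admits an inner tangent $R_2$-ball lying in $D$, the upper bound because $D$ sits inside some outer tangent $R_1$-ball. Fix a farthest point $P^\ast\in\partial D$; its inner unit normal points along the geodesic from $P^\ast$ back to $O$, so the centre $Q^\ast$ of the inner $R_2$-tangent ball at $P^\ast$ lies on that geodesic at distance $R-R_2$ from $O$.

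The core of the argument is the selection of a contact point $T^\ast\in\partial B_r(O)\cap\partial D$ whose geodesic direction from $O$ is not obtuse to $\vec u$, the unit tangent of $OP^\ast$ at $O$. By a first-variation (envelope-theorem) argument, the maximality of $r$ forces that for every $\vec e\in T_O\mathbb M^m(c)$ some contact point $T$ satisfies $\langle\vec t_T,\vec e\rangle\geq 0$, where $\vec t_T$ is the unit tangent of $OT$ at $O$; otherwise shifting $O$ by $\exp_O(\epsilon\vec e)$ with small $\epsilon>0$ would strictly enlarge $r$. Applied with $\vec e=\vec u$, this produces $T^\ast$ with $\psi:=\angle T^\ast OP^\ast\in[0,\pi/2]$. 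At $T^\ast$, the outer $R_1$-supporting ball $B_{R_1}(C_1)\supset D$ has its centre on the geodesic $T^\ast O$ continued past $O$ by distance $R_1-r$, so $d(O,C_1)=R_1-r$ and $C_1$ lies in direction $-\vec t_{T^\ast}$ from $O$. The inclusions $B_{R_2}(Q^\ast)\subset D\subset B_{R_1}(C_1)$ force $d(Q^\ast,C_1)\leq R_1-R_2$. In the geodesic triangle $OQ^\ast C_1$ the two sides out of $O$ have lengths $R-R_2$ and $R_1-r$ meeting at angle $\pi-\psi$; plugging into the appropriate law of cosines and dropping the cross-term via $\cos\psi\geq 0$ reduces the inclusion to
\[
(R-R_2)^2+(R_1-r)^2\leq (R_1-R_2)^2\qquad(c=0),
\]
to $\cosh\!\bigl(k(R-R_2)\bigr)\cosh\!\bigl(k(R_1-r)\bigr)\leq\cosh\!\bigl(k(R_1-R_2)\bigr)$ for $c=-k^2$, and to $\cos\!\bigl(k(R-R_2)\bigr)\cos\!\bigl(k(R_1-r)\bigr)\geq\cos\!\bigl(k(R_1-R_2)\bigr)$ for $c=k^2$. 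Since $(R-R_2)+(R_1-r)=(R-r)+(R_1-R_2)$, maximising the sum under each constraint is a one-variable problem; Cauchy--Schwarz in the Euclidean case, and the product-to-sum identities for $\cosh$ and $\cos$ in the non-Euclidean cases, all yield that the maximum is attained at $R-R_2=R_1-r$, and substituting back gives precisely \eqref{euclweq}, \eqref{hypweq}, and \eqref{spweq}.

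For sharpness I would realise the extremal configuration in a totally geodesic $2$-plane of $\mathbb M^m(c)$: a symmetric ``lens with rounded corners'' bounded by two antipodal arcs of geodesic curvature $\kappa_1$ joined tangentially by two arcs of geodesic curvature $\kappa_2$, with the free geometric parameter tuned so that the extremal equality $R-R_2=R_1-r$ from the proof is realised (in the Euclidean planar model this forces the angle between the axis through the large caps and the gluing radii to equal $\pi/4$, which is exactly what produces the $\sqrt{2}$ coefficient). The main obstacle I anticipate is extending this sharpness construction to dimension $m\geq 3$: the naive surface of revolution generated by the planar lens has latitudinal principal curvatures dropping below $\kappa_1$ on the large caps, so a direct rotational argument fails. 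Either a non-rotational construction tailored to keep \emph{all} normal curvatures in $[\kappa_1,\kappa_2]$, or an approximation argument producing a sequence of $\kappa_1,\kappa_2$-convex hypersurfaces whose shell widths converge to the right-hand sides of \eqref{euclweq}--\eqref{hypweq}, will be needed to complete the sharpness claim in full generality.
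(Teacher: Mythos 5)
Your upper-bound argument is correct but follows a genuinely different route from the paper. The paper proves the key constraint through a comparison Lemma~\ref{REst}: it assumes $\max\dist(O,\partial D)>\tilde R$ for the model \emph{rounded spindle} with the same inradius, builds an auxiliary hypersurface $\Omega=\omega^-\cup\Theta\cup\omega_2^+$ inside $D$ out of the inscribed sphere, the inner tangent $\kappa_2$-sphere at the farthest point, and the envelope of a family of $R_1$-spheres tangent to both, and derives a contradiction by showing $\Omega$ contains a ball of radius greater than $r$. You instead prove the same inequality --- note that your constraint $(R-R_2)^2+(R_1-r)^2\leqslant(R_1-R_2)^2$ and its non-Euclidean analogues are exactly the content of the paper's Theorem~\ref{Rrestth} --- directly, by combining the first-variation characterization of the incenter (some contact direction makes a non-obtuse angle with the direction to the farthest point) with the inclusion $B_{R_2}(Q^\ast)\subset D\subset B_{R_1}(C_1)$ and one law of cosines in the triangle $OQ^\ast C_1$. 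This is shorter and avoids the envelope construction entirely; the paper's route has the advantage that the extremal configuration (the rounded spindle) is built into the argument, which makes the sharpness claim and the subsequent optimization in Lemma~\ref{rsshest} immediate. Your final optimization step (Cauchy--Schwarz, respectively the product-to-sum identities, forcing $R-R_2=R_1-r$ at the maximum) is equivalent to the paper's calculus maximization of $w(\tilde r)=\tilde R-\tilde r$ over $\tilde r\in[R_2,R_1]$.

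The sharpness part of your proposal is where the gap lies, and your stated obstacle rests on a sign error. For the surface of revolution obtained by rotating a circular arc of curvature $\kappa_1$ about its chord, the latitudinal principal curvature at a point at distance $\rho$ from the axis equals $\cos t/(R_1\cos t-h)>1/R_1=\kappa_1$ (here $h>0$ is the distance from the circle's center to the axis), so these curvatures do not drop below $\kappa_1$ --- they \emph{exceed} it and blow up as $\rho\to 0$ at the two vertices. That is precisely why the paper's extremal example rounds the vertices with spherical caps of normal curvature $\kappa_2$ centered on the axis: the resulting rounded $\kappa_1,\kappa_2$-convex spindle-shaped hypersurface is $C^{1,1}$ with all normal curvatures in $[\kappa_1,\kappa_2]$ in every dimension, and choosing the member of this one-parameter family whose inradius $\tilde r$ realizes the equality $\tilde R-R_2=R_1-\tilde r$ attains equality in \eqref{euclweq}--\eqref{hypweq}. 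So no approximation argument or non-rotational construction is needed; you only need to rotate about the long axis (through the rounded vertices) rather than the short one, and verify the curvature pinching for the glued surface.
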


\begin{remark}
It is known that $R_i = 1/\kappa_i$ for $c=0$, $R_i=1/k \arccot (\kappa_i/k)$ for $c=k^2$, and $R_i = 1/k \arccoth (\kappa_i/k)$ for $c=-k^2$, $i \in \{1,2\}$.
\end{remark}
\begin{remark}
As $\kappa_2 \to \infty$, estimates~(\ref{euclweq})~-- (\ref{hypweq}) tend to the corresponding estimates in the spaces of constant curvature from~\cite{BorDr2}.
\end{remark}
\begin{remark}
By sharpness of the inequalities above and below we mean that the shown bounds are attained by so-called \textit{rounded $\kappa_1,\kappa_2$-convex spindle-shaped surfaces} (see Fig.~\ref{pic1}), which we describe in details in the next section. 
\end{remark}

In the Euclidean case we can give even more interesting estimate for the quotient $R/r$.

 \begin{theorem}
\label{th2}
If $D \subset \mathbb E^{m}$ is a $\kappa_1,\kappa_2$-convex domain in the Euclidean space, then the hypersurface $\partial D$ can be put into a spherical shell between two concentric spheres of radii $R$ and $r$ (with $R \geqslant r$) such that
\begin{equation}
\label{qeq}
\frac{R}{r} \leqslant \frac{\sqrt{\frac{\kappa_2}{\kappa_1}}+\sqrt{2}}{\sqrt{\frac{\kappa_1}{\kappa_2}}+\sqrt{2}}.
\end{equation}

Moreover, this estimate is sharp.
\end{theorem}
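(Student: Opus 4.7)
The plan is to reduce Theorem~\ref{th2} to a constrained optimization by first establishing the ``Pythagorean'' sharpening of Theorem~\ref{th1}:
\begin{equation*}
(R - R_2)^2 + (R_1 - r)^2 \leq (R_1 - R_2)^2. \qquad (\star)
\end{equation*}
Observe that $(\star)$ already implies Theorem~\ref{th1} by Cauchy--Schwarz (since $R - r = (R - R_2) + (R_1 - r) - (R_1 - R_2)$), so I expect $(\star)$ to be essentially the geometric content of the proof of Theorem~\ref{th1}.

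To verify $(\star)$, I take the shell center $O$ as in Theorem~\ref{th1}, let $P, Q \in \partial D$ be nearest and farthest from $O$, and apply Blaschke's rolling theorem at both points: the outer $R_1$-ball tangent at $P$ has center $C_1 = O - (R_1 - r)\hat n_P$, and the inner $R_2$-ball tangent at $Q$ has center $C_2 = O + (R - R_2)\hat n_Q$ (with $\hat n_P, \hat n_Q$ the outward unit normals). The nesting $\bar B(C_2, R_2) \subseteq D \subseteq \bar B(C_1, R_1)$ forces $|C_1 - C_2| \leq R_1 - R_2$, which expands to
\begin{equation*}
(R - R_2)^2 + (R_1 - r)^2 + 2(R - R_2)(R_1 - r)\,\hat n_P \cdot \hat n_Q \leq (R_1 - R_2)^2.
\end{equation*}
Thus $(\star)$ follows whenever $\hat n_P \cdot \hat n_Q \geq 0$; ensuring this by a careful choice of $O$ (the same one yielding the sharp bound in Theorem~\ref{th1}) is the main technical step, and it is the hardest part of the argument---the condition is not automatic at, for instance, the circumcenter or incenter.

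Given $(\star)$, I set $u = R - R_2$ and $s = R_1 - r$ and maximize $R/r = (R_2 + u)/(R_1 - s)$ on the quarter-disk $\{u^2 + s^2 \leq (R_1 - R_2)^2,\ u, s \geq 0\}$. The maximum lies on the boundary circle; parametrizing $(u, s) = (R_1 - R_2)(\cos\varphi, \sin\varphi)$ and differentiating in $\varphi$ yields the critical equation
\begin{equation*}
R_1 \sin\varphi - R_2 \cos\varphi = R_1 - R_2,
\end{equation*}
which linearizes under the Weierstrass substitution $T = \tan(\varphi/2)$ to the quadratic $(2 - t^2)T^2 + 2t^2 T - t^2 = 0$ with $t = \sqrt{R_1/R_2}$. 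Its positive root is $T = t/(t + \sqrt{2})$; substituting back into $R/r$ and simplifying (there is some pleasant cancellation) produces exactly the right-hand side of~(\ref{qeq}). Sharpness is verified by taking the rounded $\kappa_1,\kappa_2$-spindle whose geometric parameters place $(u, s)$ at this critical point, and checking that the shell centered at the spindle's center of symmetry realizes equality in~(\ref{qeq}).
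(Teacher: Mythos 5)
Your inequality $(\star)$ is exactly the paper's Theorem~\ref{Rrestth} in the Euclidean case (inequality~(\ref{Rresteq1})), and your optimization step is equivalent to the paper's Lemma~\ref{rsshest}, which maximizes $q(\tilde r)=\tilde R/\tilde r$ with $\tilde R=\sqrt{(R_1-R_2)^2-(R_1-\tilde r)^2}+R_2$ over $\tilde r\in[R_2,R_1]$. I checked your critical equation, the root $T=t/(t+\sqrt2)$, and the resulting value $t(t+\sqrt2)/(1+\sqrt2\,t)$, which does equal the right-hand side of~(\ref{qeq}); the relaxation to the quarter-disk is harmless because the bound is attained by the rounded spindle. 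So the second half of your argument is sound, and so is the observation that $(\star)$ gives~(\ref{euclweq}) by Cauchy--Schwarz.

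The genuine gap is in the first half: you never establish $\hat n_P\cdot\hat n_Q\geqslant 0$, and you explicitly defer it as ``the main technical step'' and ``the hardest part.'' As written, $(\star)$ is therefore not proved. The step can be closed at the incenter, contrary to your remark that it is not automatic there: if $O$ is the center of a maximal inscribed ball of radius $r$, the contact points of that ball with $\partial D$ cannot all lie in an open half-space $\{x:(x-O)\cdot u>0\}$ (otherwise translating $O$ by $-\epsilon u$ would yield a strictly larger inscribed ball), so for the farthest point $Q$ one may \emph{choose} a contact point $P$ with $\hat n_P\cdot\hat n_Q\geqslant 0$, and your nesting computation then goes through; the point is that $P$ must be selected among the possibly many nearest points, not taken arbitrarily. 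By contrast, the paper proves~(\ref{Rresteq1}) through the comparison Lemma~\ref{REst}: a contradiction argument that places the envelope hypersurface $\Omega=\omega^-\cup\Theta\cup\omega_2^+$ inside $D$ and shows it would contain a ball of radius strictly larger than $r$. That route exploits the maximality of the inball differently and, unlike your vector computation, works uniformly in all three constant-curvature spaces --- which is irrelevant for Theorem~\ref{th2} but is why the paper organizes the proof that way.
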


\begin{corollary}
\label{cor1}
In the condition of Theorem~\ref{th2}, it is true that
$$
\frac{R}{r} \leqslant \frac{\kappa_2}{\kappa_1}.
$$ 
\end{corollary}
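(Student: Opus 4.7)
The plan is to derive the corollary as a purely algebraic consequence of the sharp bound in Theorem~\ref{th2}. Set $t = \sqrt{\kappa_2/\kappa_1}$; since by definition $\kappa_2 \geqslant \kappa_1 > 0$ in the Euclidean case, we have $t \geqslant 1$, and $\sqrt{\kappa_1/\kappa_2} = 1/t$.

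Substituting into~\eqref{qeq} and clearing the fraction by multiplying numerator and denominator by $t$, I would rewrite the bound from Theorem~\ref{th2} as
\begin{equation*}
\frac{R}{r} \leqslant \frac{t+\sqrt{2}}{1/t + \sqrt{2}} = \frac{t^2 + \sqrt{2}\,t}{1 + \sqrt{2}\,t}.
\end{equation*}
To prove the corollary it then suffices to check that the right-hand side is at most $t^2 = \kappa_2/\kappa_1$. Since $1 + \sqrt{2}\,t > 0$, this inequality is equivalent to
\begin{equation*}
t^2 + \sqrt{2}\,t \leqslant t^2 + \sqrt{2}\,t^3,
\end{equation*}
i.e.\ to $t \leqslant t^3$, which holds for every $t \geqslant 1$.

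There is no real obstacle here; the content of the corollary is entirely contained in Theorem~\ref{th2}, and the only step is the monotonicity estimate $t \leqslant t^3$ for $t \geqslant 1$. I would conclude by noting that, in contrast to~\eqref{qeq}, the simpler bound $R/r \leqslant \kappa_2/\kappa_1$ is no longer sharp, since the inequality $t \leqslant t^3$ is strict whenever $\kappa_1 < \kappa_2$.
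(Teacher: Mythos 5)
Your proof is correct and is exactly the intended derivation: the paper states the corollary as an immediate consequence of Theorem~\ref{th2} without writing out the algebra, and your computation (reducing the comparison to $t \leqslant t^3$ for $t = \sqrt{\kappa_2/\kappa_1} \geqslant 1$) supplies precisely that omitted verification. Your closing observation that the bound $R/r \leqslant \kappa_2/\kappa_1$ is strict for $\kappa_1 < \kappa_2$ is also consistent with the paper, which claims sharpness only for~\eqref{qeq}, not for the corollary.
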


\begin{remark}
In the theorems and the corollary above, as we will see from the proofs, the center of the shell can be chosen to coincide with the center of the inscribe ball for the domain $D$. Compare this to ~\cite{RSch}, where Corollary~\ref{cor1} is proved for the center of the shell being one of the curvature centroids of $\partial D$. 
\end{remark}

\begin{remark}
We note that if in the above $\kappa_1 = \kappa_2$, then from all estimates~(\ref{euclweq})~--~(\ref{qeq}) it follows that $R = r$, and thus the domain $D$  is a geodesic ball of the corresponding space. 
\end{remark}

Theorems~\ref{th1} and~\ref{th2} are based on the following result, which is useful by itself. It gives the sharp upper bound on the outer radius $R$ of the spherical shell in terms of the inner radius $r$ of that shell.

\begin{theorem}
\label{Rrestth}
If $D \subset \mathbb M^{m}(c)$ is a $\kappa_1,\kappa_2$-convex domain, then the hypersurface $\partial D$ can be put into a spherical shell between two concentric spheres of radii $R$ and $r$ (with $R\geqslant r$) such that
\begin{enumerate}
\item
for $c=0$,
\begin{equation}
\label{Rresteq1}
R \leqslant \sqrt{\left(R_1 - R_2\right)^2 - \left(R_1 - r\right)^2} + R_2;
\end{equation}
\item
for $c = k^2$ with $k>0$,
\begin{equation}
\label{Rresteq2}
R \leqslant \frac{1}{k}\arccos \frac{\cos \left(k (R_1 - R_2)\right)}{\cos \left(k (R_1 - r)\right)} + R_2;
\end{equation}
\item
for $c = -k^2$ with $k>0$,
\begin{equation}
\label{Rresteq3}
R \leqslant \frac{1}{k}\arccosh \frac{\cosh \left(k(R_1 - R_2)\right)}{\cosh \left(k(R_1 - r)\right)} + R_2,
\end{equation}
\end{enumerate}
where $R_1$ and $R_2$ are the radii of circles with geodesic curvature equal to, respectively, $\kappa_1$ and $\kappa_2$, and lying in the corresponding $2$-planes $\mathbb M^2(c)$.

Moreover, estimates~(\ref{Rresteq1})~-- (\ref{Rresteq3}) are sharp.
\end{theorem}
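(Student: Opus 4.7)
The plan is to place the desired spherical shell concentrically with a maximal inscribed ball $B(O, r) \subset D$; here $R_2 \leq r \leq R_1$ follows immediately from Blaschke's rolling theorem applied at any touching point of $B(O, r)$ with $\partial D$. Let $P \in \partial D$ be a point farthest from $O$ and set $R := \dist(O, P)$; it suffices to prove that this $R$ satisfies the claimed inequality. The idea is to apply Blaschke at $P$ and at a carefully chosen touching point of $B(O, r)$, and then combine the two resulting supporting balls via the law of cosines.

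At $P$ the outward normal to $\partial D$ is directed along the geodesic $OP$, so Blaschke at $P$ yields an inner ball $B(Q_2, R_2) \subseteq D$ tangent to $\partial D$ at $P$, with $Q_2$ on the geodesic $OP$ at distance $R_2$ from $P$ toward $O$; in particular $\dist(O, Q_2) = R - R_2$. Next, I would invoke the maximality of $B(O, r)$: the outward unit normals at points of $\partial B(O, r) \cap \partial D$ must have $0$ in their convex hull, since otherwise $B(O, r)$ could be pushed in a direction pointing strictly away from all of them and then enlarged. Pairing this property with the unit direction along $OP$ furnishes a touching point $T \in \partial B(O, r) \cap \partial D$ for which $\cos \theta \geq 0$, where $\theta := \angle P O T$. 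Blaschke applied at $T$ then supplies an outer ball $B(Q_1', R_1) \supseteq D$ tangent to $\partial D$ at $T$, with center $Q_1'$ on the geodesic $T O$ extended past $O$ at distance $R_1 - r$ from $O$.

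Since $B(Q_2, R_2) \subseteq D \subseteq B(Q_1', R_1)$, internal containment forces $\dist(Q_2, Q_1') \leq R_1 - R_2$. In the geodesic triangle $O Q_2 Q_1'$ the sides at $O$ have lengths $R - R_2$ and $R_1 - r$, and the angle at $O$ equals $\pi - \theta$. The law of cosines of $\mathbb M^m(c)$ together with $\cos \theta \geq 0$ allows one to discard the angular cross-term with a favorable sign, yielding in the three cases
\begin{align*}
(R - R_2)^2 + (R_1 - r)^2 &\leq (R_1 - R_2)^2 && (c = 0), \\
\cos(k(R - R_2)) \cos(k(R_1 - r)) &\geq \cos(k(R_1 - R_2)) && (c = k^2), \\
\cosh(k(R - R_2)) \cosh(k(R_1 - r)) &\leq \cosh(k(R_1 - R_2)) && (c = -k^2),
\end{align*}
which rearrange precisely to~(\ref{Rresteq1})--(\ref{Rresteq3}). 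Sharpness is witnessed by the rounded $\kappa_1,\kappa_2$-convex spindle of the next section: its inscribed ball meets $\partial D$ on the equator orthogonal to the axis $OP$, so $\cos \theta = 0$, and the inner $R_2$-ball at the tip of the spindle is internally tangent to the outer $R_1$-ball supporting the equator, making the containment $B(Q_2, R_2) \subseteq B(Q_1', R_1)$ tight.

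The main obstacle is the selection of $T$: I must translate the maximality of $B(O, r)$ into the convex-hull condition on outward normals at its touching points with $\partial D$ and then extract a touching point with $\cos \theta \geq 0$, working within the tangent space $T_O \mathbb M^m(c)$. Once this geometric setup is justified, the rest reduces to the appropriate law of cosines together with a single sign check, which applies uniformly across the three signs of $c$.
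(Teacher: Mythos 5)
Your proof is correct, but it follows a genuinely different route from the paper's. The paper first proves a comparison lemma (Lemma~\ref{REst}): arguing by contradiction, it assumes the farthest point of $\partial D$ from the incenter $O$ lies beyond the circumradius $\tilde R$ of the rounded spindle with the same inradius, builds an auxiliary $C^{1,1}$ hypersurface $\Omega=\omega^-\cup\Theta\cup\omega_2^+$ inside $D$ (the envelope of $R_1$-spheres tangent to both the inball $\omega$ and the inner $R_2$-ball $\omega_2$ at the farthest point), uses the law of cosines to show the relevant angle at $O$ is strictly less than $\pi/2$, and concludes that $\Omega$ --- hence $D$ --- contains a ball of radius strictly larger than $r$, contradicting maximality of the inball; Theorem~\ref{Rrestth} then follows by computing $\tilde R$ as a function of $\tilde r=r$. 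You instead extract the needed angle condition \emph{directly} from the maximality of the inball, via the standard characterization that the origin lies in the convex hull of the unit directions toward the touching points of $\partial B(O,r)\cap\partial D$, which yields a touching point $T$ with $\cos\theta\geqslant 0$; the estimate then drops out of one application of the law of cosines to the triangle $OQ_2Q_1'$ together with the internal-tangency bound $\dist(Q_2,Q_1')\leqslant R_1-R_2$. The two arguments are dual uses of the same underlying triangle: the paper converts the contradiction hypothesis into an angle bound and then manufactures a larger inscribed ball, while you convert the incenter's first-order optimality into the angle bound and read off $R$ immediately. Your route avoids the envelope construction and the triangle-inequality existence check for the tangent $R_1$-spheres, and it produces the closed-form bounds without passing through the spindle comparison; its cost is the convex-hull lemma on touching normals, which you correctly identify as the main point requiring justification --- it is standard (a first-variation-of-distance plus compactness argument valid in all three space forms, given that for $c>0$ the domain sits inside a ball of radius $R_1\leqslant \pi/(2k)$), but it is the one step you only sketch. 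Your sharpness discussion for the rounded spindle ($\cos\theta=0$ on the equator, internal tangency of the $R_2$-cap to the supporting $R_1$-sphere) is also correct and matches the paper's extremal example.
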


As it was said earlier, the results of Theorems~\ref{th1} and~\ref{th2} allow us to derive accurate stability results for almost umbilical hypersurfaces. In particular, we prove the following

\begin{theorem}
\label{stabilitythm}
If $D \subset \mathbb M^m(c)$ is a $\kappa,(1+\varepsilon) \kappa$-convex domain for some $\varepsilon \geqslant 0$, then the hypersurface $\partial D$ can be put into a spherical shell between two concentric spheres with radii $R$ and $r$ (with $R \geqslant r$) such that $$R - r <  C(\kappa,c)\cdot\varepsilon,$$
with $C(\kappa,c) = \frac{\kappa}{\kappa^2+c} \left(\sqrt{2}-1\right) $, and this constant is the best possible.

Moreover, for $c=0$, the hypersurface $\partial D$ can be put into a spherical shell between two concentric spheres with radii $R$ and $r$ ($R \geqslant r$) such that $$\frac{R}{r}-1 < C \cdot \varepsilon$$ with $C = \sqrt{2}-1$, and this constant is the best possible.
\end{theorem}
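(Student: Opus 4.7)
The natural plan is to specialize Theorems~\ref{th1} and~\ref{th2} to the pinching $\kappa_{1}=\kappa$, $\kappa_{2}=(1+\varepsilon)\kappa$ and analyse the resulting right-hand sides as functions of $\varepsilon$. Both Euclidean assertions then reduce to elementary algebra. Since $R_{i}=1/\kappa_{i}$, Theorem~\ref{th1}(1) gives
\[
R-r \leqslant (\sqrt{2}-1)(R_{1}-R_{2}) = \frac{(\sqrt{2}-1)\,\varepsilon}{\kappa(1+\varepsilon)} < \frac{\sqrt{2}-1}{\kappa}\,\varepsilon = C(\kappa,0)\,\varepsilon
\]
for every $\varepsilon>0$, and setting $s=\sqrt{1+\varepsilon}$ in Theorem~\ref{th2} yields
\[
\frac{R}{r}-1 \leqslant \frac{s+\sqrt{2}}{s^{-1}+\sqrt{2}}-1 = \frac{s^{2}-1}{1+\sqrt{2}\,s} = \frac{\varepsilon}{1+\sqrt{2}\sqrt{1+\varepsilon}} < \frac{\varepsilon}{1+\sqrt{2}} = (\sqrt{2}-1)\,\varepsilon,
\]
because $\sqrt{1+\varepsilon}>1$ for $\varepsilon>0$.

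For the curved spaces $c=\pm k^{2}\neq 0$ I would unpack the bound of Theorem~\ref{th1}(2)--(3) as the composition $R-r\leqslant f(g(\varepsilon))$, where
\[
g(\varepsilon):=R_{1}-R_{2}=\tfrac{1}{k}\bigl[\arccot(\kappa/k)-\arccot((1+\varepsilon)\kappa/k)\bigr]
\]
(with $\arccoth$ in place of $\arccot$ in the hyperbolic case) and $f(t):=\tfrac{2}{k}\arccos\sqrt{\cos(kt)}-t$ (with $\arccosh$, $\cosh$ in the hyperbolic case). A direct differentiation gives $g'(0)=\kappa/(\kappa^{2}+c)$, $g''(0)<0$, $f(0)=0$, $f'(0)=\sqrt{2}-1$, and (after a short simplification) $f''(0)=0$, so
\[
R-r \leqslant f(g(\varepsilon))=C(\kappa,c)\,\varepsilon - A\,\varepsilon^{2}+O(\varepsilon^{3}), \qquad A=\tfrac{1}{2}(\sqrt{2}-1)|g''(0)|>0,
\]
which already delivers the desired strict bound for small $\varepsilon$. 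The main obstacle is to extend strictness to all admissible $\varepsilon>0$, and for this I plan to study the auxiliary function $\psi(\varepsilon):=C(\kappa,c)\,\varepsilon-f(g(\varepsilon))$. Using the explicit identity $g'(\varepsilon)=(\kappa/k^{2})\sin^{2}(\alpha-kg(\varepsilon))$ with $\alpha=\arccot(\kappa/k)$ (respectively $\sinh^{2}$ and $\arccoth$ in the hyperbolic case), the inequality $\psi\geqslant 0$ reduces to a single-variable trigonometric (resp.\ hyperbolic) inequality on $[0,\alpha)$ that can be verified by direct computation.

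Sharpness of both constants is inherited from the sharpness of Theorems~\ref{th1} and~\ref{th2}: applying them to the rounded $\kappa,(1+\varepsilon)\kappa$-convex spindle surfaces of Fig.~\ref{pic1} and letting $\varepsilon\to 0^{+}$, the ratios $(R-r)/\varepsilon$ and $(R/r-1)/\varepsilon$ converge to $C(\kappa,c)$ and $\sqrt{2}-1$ respectively, so that neither constant can be replaced by a smaller one.
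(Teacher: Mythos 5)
Your proposal follows essentially the same route as the paper, whose proof of this theorem consists precisely of substituting $\kappa_1=\kappa$, $\kappa_2=(1+\varepsilon)\kappa$ into the bounds of Theorems~\ref{th1} and~\ref{th2} and performing a Taylor analysis at $\varepsilon=0$; your Euclidean computations are complete and correct, and your first-order coefficients $f'(0)=\sqrt2-1$, $g'(0)=\kappa/(\kappa^2+c)$ in the curved cases match. The only step you flag as remaining --- extending strictness from small $\varepsilon$ to all admissible $\varepsilon$ --- is likewise left implicit in the paper's own one-line argument.
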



\section{Proofs of the main results}


We are going to prove the theorems above by using a comparison argument. Let us introduce an object to compare with.

In $\mathbb M^{m}(c)$ let us consider a spindle-shaped $\kappa_1$-convex hypersurface, that is a hypersurface obtained by rotating a smaller circular arc $PQ$ of geodesic curvature equal to $\kappa_1$ (see~\cite{BorDr2}). Such surface have two vertexes $P$ and $Q$ where its normal curvatures blow up. After smoothing these vertexes  using two spherical caps of normal curvature equal to $\kappa_2$ whose centers lie on the geodesic line $PQ$, we obtain a convex $C^{1,1}$-smooth hypersurface (see Fig.~\ref{pic1}). We will call such surfaces~\emph{rounded $\kappa_1,\kappa_2$-convex spindle-shaped hypersurfaces}. 


\begin{figure}[h]
\begin{center}
\begin{tikzpicture}[line cap=round,line join=round,>=triangle 45,x=1.0cm,y=1.0cm,scale=1.6]
\clip(-2.26,-2.58) rectangle (2.09,0.71);
\draw [shift={(-0.03,-2.41)},line width=1.2pt]  plot[domain=1.03:2.11,variable=\t]({1*2.8*cos(\t r)+0*2.8*sin(\t r)},{0*2.8*cos(\t r)+1*2.8*sin(\t r)});
\draw [shift={(-1.24,-0.4)},line width=1.2pt,color=black,fill=black,fill opacity=0.1]  plot[domain=2.11:4.17,variable=\t]({1*0.45*cos(\t r)+0*0.45*sin(\t r)},{0*0.45*cos(\t r)+1*0.45*sin(\t r)});
\draw [shift={(-0.03,1.61)},line width=1.2pt]  plot[domain=4.17:5.25,variable=\t]({1*2.8*cos(\t r)+0*2.8*sin(\t r)},{0*2.8*cos(\t r)+1*2.8*sin(\t r)});
\draw [shift={(1.17,-0.4)},line width=1.2pt,color=black,fill=black,fill opacity=0.1]  plot[domain=-1.03:1.03,variable=\t]({1*0.45*cos(\t r)+0*0.45*sin(\t r)},{0*0.45*cos(\t r)+1*0.45*sin(\t r)});
\draw [shift={(-0.03,-0.4)}] plot[domain=0:3.14,variable=\t]({0*0.78*cos(\t r)+-1*0.17*sin(\t r)},{1*0.78*cos(\t r)+0*0.17*sin(\t r)});
\draw [shift={(-0.03,-0.4)},dash pattern=on 1pt off 1pt]  plot[domain=-3.14:0,variable=\t]({0*0.78*cos(\t r)+-1*0.17*sin(\t r)},{1*0.78*cos(\t r)+0*0.17*sin(\t r)});
\draw [shift={(-1.47,-0.4)},color=black] plot[domain=0:3.14,variable=\t]({0*0.39*cos(\t r)+-1*0.08*sin(\t r)},{1*0.39*cos(\t r)+0*0.08*sin(\t r)});
\draw [shift={(-1.47,-0.4)},dash pattern=on 1pt off 1pt,fill=black,fill opacity=0.1,color=black]  plot[domain=-3.14:0,variable=\t]({0*0.39*cos(\t r)+-1*0.08*sin(\t r)},{1*0.39*cos(\t r)+0*0.08*sin(\t r)});
\draw [shift={(1.4,-0.4)},fill=black,fill opacity=0.1,color=black]  plot[domain=-3.14:0,variable=\t]({0*0.39*cos(\t r)+1*0.08*sin(\t r)},{1*0.39*cos(\t r)+0*0.08*sin(\t r)});
\draw [shift={(1.4,-0.4)},line width=0.4pt,dash pattern=on 1pt off 1pt,color=black]  plot[domain=0:3.14,variable=\t]({0*0.39*cos(\t r)+1*0.08*sin(\t r)},{1*0.39*cos(\t r)+0*0.08*sin(\t r)});
\draw [shift={(-1.24,-0.4)},dash pattern=on 1pt off 1pt]  plot[domain=-2.11:2.11,variable=\t]({1*0.45*cos(\t r)+0*0.45*sin(\t r)},{0*0.45*cos(\t r)+1*0.45*sin(\t r)});
\draw [shift={(-0.03,1.61)},dash pattern=on 1pt off 1pt]  plot[domain=3.95:4.17,variable=\t]({1*2.8*cos(\t r)+0*2.8*sin(\t r)},{0*2.8*cos(\t r)+1*2.8*sin(\t r)});
\draw [shift={(-0.03,-2.41)},dash pattern=on 1pt off 1pt]  plot[domain=2.11:2.34,variable=\t]({1*2.8*cos(\t r)+0*2.8*sin(\t r)},{0*2.8*cos(\t r)+1*2.8*sin(\t r)});
\draw [shift={(-0.03,-2.41)},dash pattern=on 1pt off 1pt]  plot[domain=0.8:1.03,variable=\t]({1*2.8*cos(\t r)+0*2.8*sin(\t r)},{0*2.8*cos(\t r)+1*2.8*sin(\t r)});
\draw [shift={(-0.03,1.61)},dash pattern=on 1pt off 1pt]  plot[domain=5.25:5.48,variable=\t]({1*2.8*cos(\t r)+0*2.8*sin(\t r)},{0*2.8*cos(\t r)+1*2.8*sin(\t r)});
\draw [dotted] (-1.97,-0.4)-- (-0.03,-2.41);
\draw [dotted] (-0.03,-2.41)-- (1.91,-0.4);
\draw [dotted] (-1.47,-0.01)-- (-1.24,-0.4);
\draw [dotted] (-1.24,-0.4)-- (-1.47,-0.79);
\draw (0.59,0.63) node[anchor=north west] {$\kappa_1$};
\draw (0.59,-0.66) node[anchor=north west] {$\kappa_1$};
\draw (-2,0.15) node[anchor=north west] {$\kappa_2$};
\draw (1.53,0.15) node[anchor=north west] {$\kappa_2$};
\draw (-1.42,0.05) node[anchor=north west] {$R_2$};
\draw (-0.79,-1.46) node[anchor=north west] {$R_1$};
\fill [color=black] (-0.03,-0.4) circle (0.5pt)  node[below] {$\tilde O$};
\fill [color=black] (-1.97,-0.4) circle (0.5pt);
\draw (-1.97,-0.4) node[below,xshift=-3pt] {$P$};
\fill [color=black] (-1.24,-0.4) circle (0.5pt);
\fill [color=black] (-0.03,-2.41) circle (0.5pt);
\draw [fill=black] (1.91,-0.4) circle (0.5pt) node[below, xshift=1pt] {$Q$};
\end{tikzpicture}
\caption{Rounded $\kappa_1,\kappa_2$-convex spindle-shaped hypersurface}
\label{pic1}
\end{center}
\end{figure}
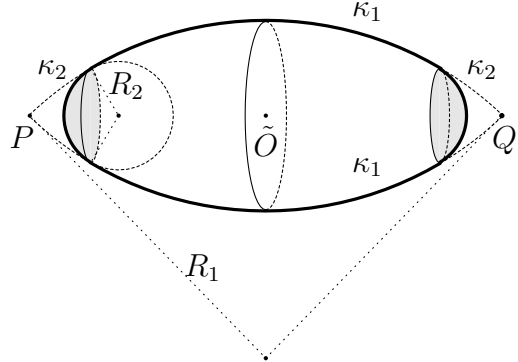


Since the constructed above rounded $\kappa_1,\kappa_2$-convex spindle-shaped hypersurface is centrally symmetric with respect to the midpoint $\tilde O$ of the geodesic segment $PQ$ (see Fig.~\ref{pic1}), the center of its inscribed ball coincides with $\tilde O$, and the radius $\tilde r$ of this ball is equal to the distance from $\tilde O$ to the hypersurface. Note that $\tilde r$ varies from $R_2$ to $R_1$, and for each $\tilde r \in [R_2, R_1]$ there exist a unique rounded $\kappa_1,\kappa_2$-convex spindle-shaped hypersurface with $\tilde r$ being the radius of its inscribed ball. Thus we get a one-parametric family of such surfaces.  

Now we can prove the key comparison lemma.

\begin{lemma}
\label{REst}
Let $D \subset \mathbb M^{m}(c)$ be a closed $\kappa_1, \kappa_2$-convex domain, $r$ be the radius of the inscribe sphere for $D$ with center at a point $O$. Let $\tilde F \subset \mathbb M^{m}(c)$ be a rounded $\kappa_1, \kappa_2$-convex spindle-shaped hypersurface, and let $\tilde r$, $\tilde R$ be the radii of its inscribe and circumscribe spheres. If
$$\tilde r =  r,$$ 
then
\begin{equation}
\label{maxdistrel}
\max \dist\left(O, \partial D\right) \leqslant \tilde R.
\end{equation}

Moreover, this bound is sharp.
\end{lemma}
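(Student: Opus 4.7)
The plan is to exploit Blaschke's rolling theorem at two distinguished boundary points --- the farthest point $P^*$ from $O$ and a carefully chosen contact point $Q$ of the inscribed sphere --- so that the resulting tangent balls sit inside a geodesic triangle whose law of cosines in $\mathbb M^m(c)$ is saturated precisely by the comparison spindle.

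Let $P^* \in \partial D$ realize $d := \max \dist(O, \partial D)$. Because $P^*$ is the farthest point from $O$, the outer unit normal to $\partial D$ at $P^*$ is the outgoing tangent of the geodesic $OP^*$, so by Blaschke's theorem the inside-tangent $R_2$-ball $B(O_2^*, R_2) \subset D$ has its center $O_2^*$ on $[O, P^*]$ at distance $d - R_2$ from $O$. On the other hand, the inscribed ball $B(O, r) \subset D$ touches $\partial D$ at some set of contact points $\{Q_i\}$, whose initial tangent directions $u_i \in T_O \mathbb M^m(c)$ of the geodesics $OQ_i$ must positively combine to zero (otherwise, shifting $O$ by a small geodesic step in a direction negatively paired with all $u_i$ would strictly enlarge the inscribed radius, contradicting the maximality of $r$). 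Pairing the relation $\sum \lambda_i u_i = 0$ with the initial direction $u^*$ of $OP^*$ shows that at least one contact point $Q$ satisfies $\langle u_Q, u^* \rangle \geq 0$, i.e.\ $\angle_O(Q, P^*) \leq \pi/2$. Applying Blaschke's theorem from outside at this $Q$ then gives an enclosing ball $B(O_1^Q, R_1) \supset D$ whose center lies on the geodesic through $O$ and $Q$ extended past $O$, at distance $R_1 - r$ from $O$.

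The double inclusion $B(O_2^*, R_2) \subset D \subset B(O_1^Q, R_1)$ now yields $\dist(O_2^*, O_1^Q) \leq R_1 - R_2$, while the angle at $O$ in the geodesic triangle $O\,O_2^*\,O_1^Q$ equals $\pi - \angle_O(Q, P^*) \geq \pi/2$ and so has non-positive cosine. Applying the law of cosines in $\mathbb M^m(c)$ to this triangle --- with sides $d - R_2$ and $R_1 - r$ meeting at $O$ --- and dropping the non-positive cross term produces
\[
(d-R_2)^2 + (R_1-r)^2 \leq (R_1-R_2)^2,
\]
\[
\cos(k(d-R_2))\cos(k(R_1-r)) \geq \cos(k(R_1-R_2)),
\]
\[
\cosh(k(d-R_2))\cosh(k(R_1-r)) \leq \cosh(k(R_1-R_2)),
\]
in the cases $c = 0$, $c = k^2$, and $c = -k^2$ respectively. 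Solving each for $d$ yields exactly the spindle's circumradius $\tilde R$ prescribed by the formulas (\ref{Rresteq1})--(\ref{Rresteq3}) with $\tilde r = r$, which is the desired bound $d \leq \tilde R$. Sharpness is immediate by taking $D$ to be the domain bounded by $\tilde F$ centered so that $\tilde O = O$ with axis along $OP^*$: the inscribed sphere then touches $\tilde F$ exactly along the equator, where $\angle_O(Q, P^*) = \pi/2$, every inequality above becomes an equality, and the distance from $O$ to the far vertex equals $\tilde R$.

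The most delicate step I expect is the selection of a contact point $Q$ with $\angle_O(Q, P^*) \leq \pi/2$: this is the only place where the global geometry of the inscribed sphere enters, and it is what guarantees that the cross term in the law of cosines is non-positive irrespective of which farthest point $P^*$ happens to be chosen.
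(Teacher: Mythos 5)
Your proof is correct, and it takes a genuinely different route from the paper's. The paper argues by contradiction: assuming $\max\dist(O,\partial D)>\tilde R$, it uses the rolling theorem to place the tangent $R_2$-ball at the farthest point, builds a family of externally tangent $R_1$-spheres around the inscribed ball and that $R_2$-ball, and invokes the arc-convexity Lemma~\ref{ch2lem2} to show the resulting envelope surface $\Omega=\omega^-\cup\Theta\cup\omega_2^+$ lies in $D$ and contains a ball of radius strictly greater than $r$, contradicting maximality of the inradius (the law of cosines enters only to show the relevant angle is acute). You instead argue directly: the first-order optimality condition for the incenter (the initial directions to the contact points of the inscribed sphere cannot all lie in an open half-space of $T_O\mathbb M^m(c)$) supplies a contact point $Q$ with $\angle_O(Q,P^*)\leqslant\pi/2$, and then a single law-of-cosines estimate in the triangle $O\,O_2^*\,O_1^Q$, combined with $B(O_2^*,R_2)\subset D\subset B(O_1^Q,R_1)$ forcing $\dist(O_2^*,O_1^Q)\leqslant R_1-R_2$, yields the explicit bound on $d$, which coincides with $\tilde R$ because the spindle realizes the degenerate right-angle case. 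Your version is shorter, bypasses the envelope construction and Lemma~\ref{ch2lem2} entirely, and makes the extremal configuration transparent; the price is that you must justify the variational characterization of the incenter's contact set (a Danskin-type directional-derivative argument for $\dist(\cdot,\partial D)$, which is standard but is the one nonstandard ingredient you should spell out, including compactness of the contact set), whereas the paper's surface $\Omega$ is in effect a hands-on certificate of the same non-optimality. Both arguments rely equally on Blaschke's rolling theorem and on the normal at the farthest point and at a contact point passing through $O$.
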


\begin{proof}
We will argue by contradiction. Suppose that~(\ref{maxdistrel}) is not true, and the inverse inequality 
\begin{equation}
\label{contr1}
\max \dist\left(O, \partial D\right) > \tilde R
\end{equation}
holds. For simplicity, we denote the hypersurface $\partial D$ by $F$.

Let $M \in F$ be a point such that $\max \dist (O, F) = |OM|$ (here and below $|\cdot|$ denotes the distance between two points);  then from~(\ref{contr1}) it follows that on the geodesic segment $OM$ there exists a point $A$ such that 
\begin{equation}
\label{contr2}
|OA| = \tilde R < |OM|.
\end{equation}

Assume that the rounded $\kappa_1,\kappa_2$-convex spindle-shaped hypersurface $\tilde F$ is centered at $O$, and its rotational axis coincides with the geodesic line $OA$. Then $A \in \tilde F$. 

Since the point $M$ is a point for which the maximal distance from $O$ is attained, we have that a totally geodesic hyperplane which touches $F$ at $M$ is perpendicular to the geodesic line $OM$. Therefore, if $\omega_2 \subseteq D$ is a sphere with the center at a point $O_2$, and with normal curvature equal to $\kappa_2$ that touches from inside the hypersurface $F$ at $M$ (such a sphere exists by Blaschke's rolling theorem), then $O_2$ lies on the segment $OM$. We note that from~(\ref{contr2}) it follows
\begin{equation}
\label{contr3}
|OO_2| > \tilde R - R_2,
\end{equation}
where $R_2$ is the radius of $\omega_2$.

Let us denote the inscribe sphere  for $F$  by $\omega$. Since the hypersurface $F$ is $\kappa_1, \kappa_2$-convex, then by Blaschke's rolling theorem $F$ lies in a ball of radius $R_1$ (we remind that $R_1$ is the radius of a sphere of normal curvature equal to $\kappa_1$). Hence, both $\omega$ and $\omega_2$ lies in this ball too. Now we show that there is a sphere of radius $R_1$ that touches externally both $\omega$ and $\omega_2$  simultaneously. 

Denote $|OO_2|$ by $d$. It is clear that the sphere mentioned above exists if and only if the three numbers $d$, $R_1 - r$, and $R_1 - R_2$ satisfy the triangle inequality. Let us check this:
\begin{enumerate}
\item
$(R_1 - r) + (R_1 - R_2) = 2R_1 - (r + R_2) > d$, since $\omega$ and $\omega_2$ lie in a ball of radius $R_1$;
\item
$(R_1 - R_2) + d > R_1 - r$, because $\omega$ is the inscribe sphere and thus either $\omega \equiv \omega_2$ and $r = R_1 = R_2$ that is a trivial case when $F$ is a sphere, or $\omega_2$ touches $\omega$ from inside, which is again a trivial case when $F$ is a sphere, or $\omega_2$ cannot lie entirely inside $\omega$, hence $r + d < R_2$.
\item
$(R_1-r) + d > R_1 - R_2$, which is obviously true. 
\end{enumerate}

By rotational symmetry, along with a single sphere of radius $R_1$ there exists a family of spheres of the same radius that touches simultaneously $\omega$ and $\omega_2$ along small $(m-2)$-dimensional spheres $\sigma$ and $\sigma_2$. To proceed we need the following lemma.

\begin{lemma}
\label{ch2lem2}\cite{BorDr2} If $D \subset \mathbb M^{m}(c)$ is a closed $\kappa_1$-convex domain (where for $c = 0$ we assume that $\kappa_1 > 0$, for $c > 0$ we assume $\kappa_1 \geqslant 0$ and for $c < 0$ we assume that $\kappa_1 > \sqrt{-c}$), then for any two points $A, B$ from $D$ every smaller circular arc of geodesic curvature equal to $\kappa_1$ that joins $A$ and $B$ lies in the domain $D$.
\end{lemma}

Since $D$ is, in particular, a $\kappa_1$-convex domain, then by Lemma~\ref{ch2lem2} the part $\Theta$ of the envelope of this family lying between two hyperplanes $\pi$ and $\pi_2$ (corresponding to the spheres $\sigma = \pi \cap \omega$ and $\sigma_2 = \pi_2 \cap \omega_2$), lies inside $D$  (see Fig.~\ref{pic2}). 

Let $\omega^-$ and $\omega_2^+$ be the spherical caps cut from $\omega$ and $\omega_2$ by the planes $\pi$ and $\pi_2$ in a way that $\Theta$ and $\omega^-$, $\Theta$ and $\omega_2^+$ lie on the different sides with respect to $\pi$ and $\pi_2$, correspondingly. 

Consider a $C^{1,1}$ smooth hypersurface $\omega^- \cup \Theta \cup \omega_2^+$ denoted by $\Omega$. By the arguments above, $\Omega$ lie in $D$.

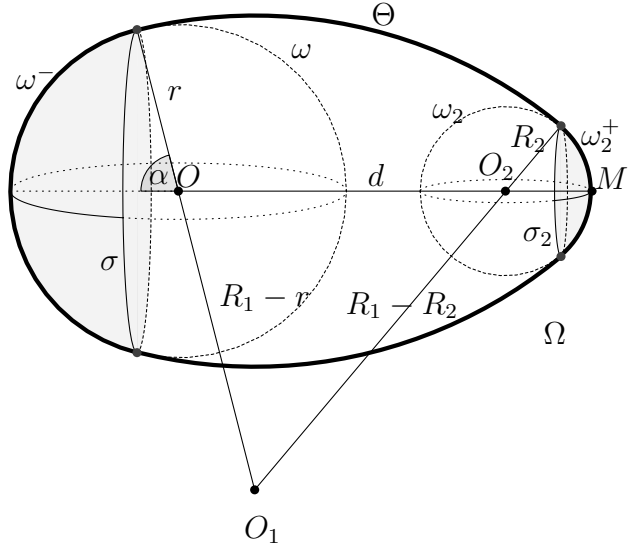
\begin{figure}[h]

\begin{center}
\definecolor{uququq}{rgb}{0.25,0.25,0.25}
\begin{tikzpicture}[line cap=round,line join=round,>=triangle 45,x=1.0cm,y=1.0cm,scale=1.7]
\clip(-1.58,-2.77) rectangle (3.51,1.63);
\draw [shift={(-0.03,0)},fill=black,fill opacity=0.1] (0,0) -- (104.17:0.29) arc (104.17:180:0.29) -- cycle;
\draw(6.18,1.11) -- (7.15,1.11);
\draw [shift={(-0.03,0)},dotted]  plot[domain=0:3.14,variable=\t]({1*1.3*cos(\t r)+0*0.22*sin(\t r)},{0*1.3*cos(\t r)+1*0.22*sin(\t r)});
\draw [shift={(2.5,0)},dotted]  plot[domain=0:3.14,variable=\t]({1*0.66*cos(\t r)+0*0.09*sin(\t r)},{0*0.66*cos(\t r)+1*0.09*sin(\t r)});
\draw [shift={(-0.35,0)}] plot[domain=0:3.14,variable=\t]({0*1.26*cos(\t r)+-1*0.11*sin(\t r)},{1*1.26*cos(\t r)+0*0.11*sin(\t r)});
\draw [shift={(-0.35,0)},dash pattern=on 1pt off 1pt,fill=black,fill opacity=0.05]  plot[domain=-3.14:0,variable=\t]({0*1.26*cos(\t r)+-1*0.11*sin(\t r)},{1*1.26*cos(\t r)+0*0.11*sin(\t r)});
\draw [shift={(2.93,0)},dash pattern=on 1pt off 1pt]  plot[domain=-3.14:0,variable=\t]({0*0.51*cos(\t r)+-1*0.05*sin(\t r)},{1*0.51*cos(\t r)+0*0.05*sin(\t r)});
\draw [shift={(2.93,0)},fill=black,fill opacity=0.05]  plot[domain=0:3.14,variable=\t]({0*0.51*cos(\t r)+-1*0.05*sin(\t r)},{1*0.51*cos(\t r)+0*0.05*sin(\t r)});
\draw [shift={(0.56,-2.33)},line width=1.6pt]  plot[domain=0.87:1.82,variable=\t]({1*3.7*cos(\t r)+0*3.7*sin(\t r)},{0*3.7*cos(\t r)+1*3.7*sin(\t r)});
\draw [shift={(0.56,2.33)},line width=1.6pt]  plot[domain=4.47:5.41,variable=\t]({1*3.7*cos(\t r)+0*3.7*sin(\t r)},{0*3.7*cos(\t r)+1*3.7*sin(\t r)});
\draw [shift={(2.5,0)},dash pattern=on 1pt off 1pt]  plot[domain=0.87:5.41,variable=\t]({1*0.66*cos(\t r)+0*0.66*sin(\t r)},{0*0.66*cos(\t r)+1*0.66*sin(\t r)});
\draw [shift={(2.5,0)},line width=1.6pt,fill=black,fill opacity=0.05]  plot[domain=-0.87:0.87,variable=\t]({1*0.66*cos(\t r)+0*0.66*sin(\t r)},{0*0.66*cos(\t r)+1*0.66*sin(\t r)});
\draw [shift={(-0.03,0)},line width=1.6pt,fill=black,fill opacity=0.05]  plot[domain=1.82:4.47,variable=\t]({1*1.3*cos(\t r)+0*1.3*sin(\t r)},{0*1.3*cos(\t r)+1*1.3*sin(\t r)});
\draw [shift={(-0.03,0)},dash pattern=on 1pt off 1pt]  plot[domain=-1.82:1.82,variable=\t]({1*1.3*cos(\t r)+0*1.3*sin(\t r)},{0*1.3*cos(\t r)+1*1.3*sin(\t r)});
\draw [shift={(-0.03,0)},line width=0.4pt]  plot[domain=3.14:4.38,variable=\t]({1*1.3*cos(\t r)+0*0.22*sin(\t r)},{0*1.3*cos(\t r)+1*0.22*sin(\t r)});
\draw [shift={(-0.03,0)},dotted]  plot[domain=-1.91:0,variable=\t]({1*1.3*cos(\t r)+0*0.22*sin(\t r)},{0*1.3*cos(\t r)+1*0.22*sin(\t r)});
\draw [shift={(2.5,0)},dotted]  plot[domain=3.14:5.31,variable=\t]({1*0.66*cos(\t r)+0*0.09*sin(\t r)},{0*0.66*cos(\t r)+1*0.09*sin(\t r)});
\draw [shift={(2.5,0)},line width=0.4pt]  plot[domain=-0.97:0,variable=\t]({1*0.66*cos(\t r)+0*0.09*sin(\t r)},{0*0.66*cos(\t r)+1*0.09*sin(\t r)});
\draw (0.56,-2.33)-- (2.93,0.51);
\draw (0.56,-2.33)-- (-0.35,1.26);
\draw (1.38,1.55) node[anchor=north west] {$\Theta$};
\draw (2.45,0.6) node[anchor=north west] {$R_2$};
\draw (1.18,-0.7) node[anchor=north west] {$R_1-R_2$};
\draw (0.2,-0.64) node[anchor=north west] {$R_1 - r$};
\draw (-1.37,1.07) node[anchor=north west] {$\omega^{-}$};
\draw (0.76,1.23) node[anchor=north west] {$\omega$};
\draw (-0.72,-0.42) node[anchor=north west] {$\sigma$};
\draw (2.55,-0.2) node[anchor=north west] {$\sigma_2$};
\draw (-0.03,0)-- (3.17,0);
\draw [dotted] (-1.33,0)-- (-0.03,0);
\draw (1.85,0.76) node[anchor=north west] {$\omega_2$};
\draw (-0.2,0.9) node[anchor=north west] {${r}$};
\draw (2.71,-0.94) node[anchor=north west] {$\Omega$};
\draw (3.0,0.65) node[anchor=north west] {$\omega_2^{+}$};
\draw (1.35,0.3) node[anchor=north west] {${d}$};
\fill [color=black] (-0.03,0) circle (1.0pt);
\draw[color=black] (0.05,0.12) node {$O$};
\fill [color=black] (3.17,0) circle (1.0pt);
\draw[color=black] (3.32,0.1) node {$M$};
\fill [color=black] (6.37,1.11) circle (1.5pt);
\draw[color=black] (6.47,1.29) node {$R_1 = 1$};
\fill [color=black] (2.5,0) circle (1.0pt);
\draw[color=black] (2.43,0.19) node {$O_2$};
\fill [color=black] (0.56,-2.33) circle (1.0pt);
\draw[color=black] (0.62,-2.64) node {$O_1$};
\fill [color=uququq] (-0.35,1.26) circle (1.0pt);
\draw[color=black] (-0.18,0.11) node {$\alpha$};
\fill [color=uququq] (2.93,0.51) circle (1.0pt);
\fill [color=uququq] (-0.35,-1.26) circle (1.0pt);
\fill [color=uququq] (2.93,-0.51) circle (1.0pt);
\end{tikzpicture}
\end{center}

\caption{The hypersurface $\Omega = \omega^- \cup \Theta \cup \omega_2^+$.}
\label{pic2}

\end{figure}

Now, using~(\ref{contr3}) we are going to show  that it is possible to inscribe inside $\Omega$ a sphere of radius $r'$ strictly greater than the radius $r$ of the inscribed sphere. This will give us  a desired contradiction, since by definition the inscribed sphere is a sphere of maximum radius lying in a domain .  

By symmetry, the center $C$ of the inscribed in $\Omega$ sphere lie on the geodesic $OO_2$. Let us consider a section of $\Omega$ by a totally geodesic two-dimensional plane $\Pi$ passing trough $OO_2$. Then the curve $\Omega'$ defined as $\Omega \cap \Pi$ consists of four parts: an arc $\omega'$ of a circle  $\omega \cap \Pi$ of radius $r$, an arc $\omega_2'$ of a circle $\omega_2 \cap \Pi$ of radius $R_2$, and two circular arcs of radius $R_1$. Denote by $O_1$ the center of one of the corresponding circles (see Fig.~\ref{pic2}). Then $|OO_1| = R_1 - r$, $|O_2 O_1| = R_1 - R_2$. Let $\alpha$ be the angle between the geodesic lines $OO_2$ and $OO_1$. Assume $\alpha \leqslant {\pi}/{2}$.

From the construction of $\tilde F$ it follows that the triangle with the side lengths $R_1 - r$, $R_1 - R_2$, and $\tilde R - R_2$ is a right triangle. Denote $\tilde R - R_2$ by $\tilde d$. From~(\ref{contr3}) we have that $d > \tilde d$. By the law of cosines from the geodesic triangle $\triangle O O_2 O_1$, we deduce:
\begin{enumerate}
\item
For $c = 0$, 
\begin{equation*}
\begin{aligned}
\left(R_1 - R_2\right)^2 &= \left(R_1 - r\right)^2 + d^2 - 2 d \left(R_1 - r\right) \cos \alpha \\
&> \left(R_1 - r\right)^2 + {\tilde{d}}^2 - 2 d \left(R_1 - r\right) \cos \alpha.
\end{aligned}
\end{equation*}
Since $\left(R_1 - R_2\right)^2 = \left(R_1 - r\right)^2 + {\tilde d}^2$, then from the computations above it follows that $\cos \alpha > 0$, thus $\alpha < {\pi}/{2}$.
\item
For $c = 1$, 
\begin{equation*}
\begin{aligned}
\cos \left(R_1 - R_2\right) &= \cos \left(R_1 - r\right) \cos d + \sin \left(R_1 - r\right) \sin d \cos \alpha \\
&< \cos \left(R_1 - r\right) \cos \tilde d + \sin \left(R_1 - r\right) \sin d \cos \alpha.
\end{aligned}
\end{equation*}
Recalling that $\cos \left(R_1 - R_2\right) = \cos \left(R_1 - r\right) \cos \tilde d$, we obtain $\cos \alpha > 0$, $\alpha < {\pi}/{2}$. 
\item
For $c = -1$,
\begin{equation*}
\begin{aligned}
\cosh \left(R_1 - R_2\right) &= \cosh \left(R_1 - r\right) \cosh d - \sinh \left(R_1 - r\right) \sinh d \cos \alpha \\
&> \cosh \left(R_1 - r\right) \cosh \tilde d + \sinh \left(R_1 - r\right) \sinh d \cos \alpha.
\end{aligned}
\end{equation*}
And since $\cosh \left(R_1 - R_2\right) = \cosh \left(R_1 - r\right) \cosh \tilde d$, we get $\alpha < {\pi}/{2}$. 
\end{enumerate}

Therefore, in all the cases the angle $\alpha$ is strictly less than ${\pi}/{2}$. Hence,  from the right triangle $\triangle OO_1C$ we have $|OO_1| > |O_1C|$. Thus, if $r'$ is the radius of the inscribed in $\Omega'$ circle, then $r'= R_1 - |O_1C|$ and $r' > R_1 - |OO_1| = r$. And since it is true for any plane $\Pi$, we come to the contradiction which proves~(\ref{maxdistrel}).  

Inequality~(\ref{maxdistrel}) is sharp since the equality is obviously attained for $\tilde F$. Lemma~\ref{REst} is proved. 

\end{proof}

Theorem~\ref{Rrestth} is a direct consequence of Lemma~\ref{REst}. Indeed, by Blaschke's rolling theorem, if $r$ is the radius of the inscribed sphere for $D$, then there exist a unique rounded $\kappa_1,\kappa_2$-convex spindle-shaped hypersurface with $\tilde r=r$. As the straightforward computations show, the right sides of inequalities~(\ref{Rresteq1})~-- (\ref{Rresteq3}) are the values of $\tilde R$ in terms of the inscribed sphere's radius $\tilde r = r$, after which Lemma~\ref{REst} implies Theorem~\ref{Rrestth} (the computations are omitted).  

In order to prove Theorems~\ref{th1} and~\ref{th2}, we should derive additional estimates for the spherical shell's width and the quotient of its radii in the case of rounded $\kappa_1,\kappa_2$-convex spindle-shaped hypersurfaces. These estimates are summarized in the following lemma.

\begin{lemma}
\label{rsshest} 
Suppose $\tilde F \subset \mathbb M^{m}(c)$ is a rounded $\kappa_1, \kappa_2$-convex spindle-shaped hypersurface, $\tilde r$ and $\tilde R$ are the radii of the inscribe and circumscribe spheres for $\tilde F$; then for the width $\tilde R - \tilde r$ estimates~(\ref{euclweq})~--~(\ref{hypweq}) hold. Moreover, when $c=0$ for the quotient ${\tilde R}/{\tilde r}$ estimate~(\ref{qeq}) holds.

\end{lemma}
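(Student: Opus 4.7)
The plan is to reduce all three estimates to a one-variable optimization over the one-parameter family of rounded $\kappa_1,\kappa_2$-convex spindle-shaped hypersurfaces. First, exploit the rotational and central symmetry of $\tilde F$. The inscribed ball of radius $\tilde r$ touches the spindle part at its equator, so the center $O_1$ of the corresponding spindle circle (radius $R_1$) lies on the axis perpendicular to $PQ$ through $\tilde O$, at distance $R_1-\tilde r$ from $\tilde O$. The circumscribed sphere of radius $\tilde R$ passes through the tip of the smoothing cap on the axis $PQ$, so the cap center $O_2$ (radius $R_2$) lies on $PQ$ at distance $\tilde R-R_2$ from $\tilde O$. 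Smoothness of the rounding forces internal tangency between the spindle and cap circles, so $|O_1O_2|=R_1-R_2$.

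Since $\tilde O O_1\perp \tilde O O_2$, the geodesic triangle $\triangle\tilde O O_1 O_2$ is right-angled at $\tilde O$, and the law of cosines in $\mathbb M^2(c)$ gives the defining relation for the family:
\begin{equation*}
\begin{aligned}
&(R_1-\tilde r)^2+(\tilde R-R_2)^2=(R_1-R_2)^2 &&\text{if } c=0,\\
&\cos\bigl(k(R_1-\tilde r)\bigr)\cos\bigl(k(\tilde R-R_2)\bigr)=\cos\bigl(k(R_1-R_2)\bigr) &&\text{if } c=k^2>0,\\
&\cosh\bigl(k(R_1-\tilde r)\bigr)\cosh\bigl(k(\tilde R-R_2)\bigr)=\cosh\bigl(k(R_1-R_2)\bigr) &&\text{if } c=-k^2<0.
\end{aligned}
\end{equation*}
Parametrize the family by $u=R_1-\tilde r\in[0,R_1-R_2]$, solve each relation for $\tilde R-R_2$, and substitute into $\tilde R-\tilde r=(\tilde R-R_2)+u-(R_1-R_2)$. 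A straightforward differentiation in $u$ yields in the Euclidean case the critical point $u=(R_1-R_2)/\sqrt{2}$, giving the bound $(\sqrt{2}-1)(R_1-R_2)$. In the spherical and hyperbolic cases the critical equation simplifies pleasantly to $\cos(ku)=\sqrt{\cos(k(R_1-R_2))}$ and $\cosh(ku)=\sqrt{\cosh(k(R_1-R_2))}$ respectively; at the critical point the symmetry $\tilde R-R_2=R_1-\tilde r=u$ emerges from the geometry, and the bounds \eqref{euclweq}--\eqref{hypweq} follow directly.

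For the Euclidean ratio, the same parametrization gives $\tilde R/\tilde r=(\sqrt{A^2-u^2}+R_2)/(R_1-u)$ with $A=R_1-R_2$. Differentiating and clearing radicals produces a quadratic in $u$ whose admissible root can be written explicitly and leads, after substitution, to the claimed bound \eqref{qeq}. The sharpness claim is automatic in each case because the rounded spindle itself realizes equality. The main obstacle is the algebraic simplification for the ratio estimate: reducing the value of $\tilde R/\tilde r$ at the critical point to the symmetric expression $(\sqrt{\kappa_2/\kappa_1}+\sqrt{2})/(\sqrt{\kappa_1/\kappa_2}+\sqrt{2})$ requires a nontrivial but routine polynomial manipulation (conveniently verified after substituting $t=\sqrt{R_1/R_2}$). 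No geometric input beyond the right-triangle identity is needed, so the lemma is a calculus exercise built on the one explicit identity above.
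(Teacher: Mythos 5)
Your proposal is correct and follows essentially the same route as the paper: both derive the explicit relation between $\tilde R$ and $\tilde r$ from the right geodesic triangle $\triangle \tilde O O_1 O_2$ (Pythagoras in $\mathbb M^2(c)$) and then maximize the width, respectively the quotient, over the one-parameter family by elementary calculus, with the same critical points and the same algebraic simplification to the bound~(\ref{qeq}). The only difference is cosmetic: you carry out the spherical and hyperbolic critical-point computations explicitly (via differentiating the constraint $\cos(ku)\cos(kv)=\cos(kA)$, yielding $u=v$), whereas the paper only writes out the Euclidean case and declares the others similar.
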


\begin{proof}
Estimates~(\ref{euclweq})~--~(\ref{hypweq}) are obtained similarly to~\cite{BorDr2}. Let us show~(\ref{euclweq}). In the rest of the cases computations are similar.  

If $R_1 = {1}/{\kappa_1}$ and $R_2 = {1}/{\kappa_2}$ are, as usual, the radii of the spheres of the curvatures equal to $\kappa_1$ and $\kappa_2$, then it is easy to see that $$\tilde R = \sqrt{\left(R_1 - R_2\right)^2 - \left(R_1 - \tilde r\right)^2} + R_2.$$ Let us introduce a function
\begin{equation*}
w(\tilde r) = {\tilde R} - {\tilde r} = \sqrt{\left(R_1 - R_2\right)^2 - \left(R_1 - \tilde r\right)^2} + R_2 - \tilde r
\end{equation*}
defined for $\tilde r \in \left[R_2, R_1\right]$. By construction, $w \geqslant 0$, and $w(R_1) = w(R_2) = 0$. Hence, this function attains the global maximum on $\left(R_2, R_1\right)$. Solving the equation for the derivative $dw/d\tilde r=0$, which is a linear equation with respect to $\tilde r$, and substituting its solution in $w(\tilde r)$, we will get~(\ref{euclweq}).

Now let us prove estimate~(\ref{qeq}).

Similarly to the above, we introduce the function 
\begin{equation}
\label{ffunceq}
q(\tilde r) = \frac{\tilde R}{\tilde r} = \frac{1}{\tilde r} \left(\sqrt{\left(R_1 - R_2\right)^2 - \left(R_1 - \tilde r\right)^2} + R_2\right)
\end{equation}
defined for $\tilde r \in \left[R_2, R_1\right]$. Moreover, by construction, $q \geqslant 1$, and $q(R_1) = q(R_2) = 1$. Hence, this function attains the global maximum on $\left(R_2, R_1\right)$ unless the trivial case $R_1 = R_2$. Let us find this maximal value.

Solving the equation $dq/d\tilde r = 0$ for $\tilde r$, which simplifies to a quadratic equation, we get the root
\begin{equation*}
\label{roots}
\tilde r_0 = \frac{1}{R_1^2 + R_2^2} \left(2R_1^2 R_2 - R_2 \left(R_1 - R_2\right)  \sqrt{2 R_1 R_2}\right).
\end{equation*}
The function $q$ attains its maximum at $\tilde r_{0}$. Therefore
\begin{equation}
\label{fest}
q(\tilde r) \leqslant q(\tilde r_{0}) \,\,\text{for all $\tilde r$ from} \left[R_2, R_1 \right].
\end{equation}

Now we proceed with the computing $q(\tilde r_{0})$. It is straightforward to check that
\begin{equation*}
\left(R_1 - R_2\right)^2 - \left(R_1 - \tilde r_{0}\right)^2 = \frac{\left(R_1 - R_2\right)^2}{\left(R_1^2 + R_2^2\right)^2} \left(R_2 \left(R_1 - R_2\right) - R_1 \sqrt{2 R_1 R_2}\right)^2.
\end{equation*}
Thus, since $R_1 \sqrt{2 R_1 R_2} > R_2 \left(R_1 - R_2\right)$,
\begin{equation}
\label{numeq}
\begin{aligned}
&\sqrt{\left(R_1 - R_2\right)^2 - \left(R_1 - \tilde r_{0}\right)^2} + R_2 = \\
&=\frac{\sqrt{2 R_1 R_2}}{R_1^2 + R_2^2} \left( R_1  + \sqrt{2 R_1 R_2}\right) \left( R_1 + R_2 - \sqrt{2 R_1 R_2}\right).
\end{aligned}
\end{equation}
We can also rewrite $\tilde r_{0}$ in the same manner:
\begin{equation}
\label{denomeq}
\begin{aligned}
\tilde r_{0} 
&=\frac{\sqrt{2 R_1 R_2}}{R_1^2 + R_2^2} \left( R_2  + \sqrt{2 R_1 R_2}\right) \left( R_1 + R_2 - \sqrt{2 R_1 R_2}\right).
\end{aligned}
\end{equation}
Combining~(\ref{numeq}) and~(\ref{denomeq}), and recalling that $R_i = {1}/{\kappa_i}$, $i \in \{1,2\}$,  we get
\begin{equation}
\label{fmaxval}
q(\tilde r_{0}) = \frac{R_1 + \sqrt{2 R_1 R_2}}{R_2 + \sqrt{2 R_1 R_2}} =  \frac{\sqrt{\frac{\kappa_2}{\kappa_1}}+\sqrt{2}}{\sqrt{\frac{\kappa_1}{\kappa_2}}+\sqrt{2}}.
\end{equation}

Thereby, from~(\ref{ffunceq}),~(\ref{fest}), and~(\ref{fmaxval}), we finally obtain
$$
\frac{\tilde R}{\tilde r} \leqslant  \frac{\sqrt{\frac{\kappa_2}{\kappa_1}}+\sqrt{2}}{\sqrt{\frac{\kappa_1}{\kappa_2}}+\sqrt{2}},
$$
as desired.

The bound above is sharp and is attained for the rounded $\kappa_1, \kappa_2$-convex hypersurface with the radius of the inscribe sphere equal to $\tilde r_{0}$. Lemma~\ref{rsshest} is proved.

\end{proof}

Now, Theorems~\ref{th1} and~\ref{th2} are the direct consequences  of the comparison Lemma~\ref{REst} and Lemma~\ref{rsshest}.

Theorem~\ref{stabilitythm} is a corollary of Theorems~\ref{th1} and~\ref{th2}, and is obtained by substituting $\kappa_1=\kappa$, $\kappa_2=(1+\varepsilon)\kappa$ in the right sides of~(\ref{euclweq})~--~(\ref{hypweq}) and~(\ref{qeq}) with the subsequent standard analysis of the Taylor expansion for $\varepsilon$ of the obtained expression at $\varepsilon = 0$.


\begin{ack}
This work was partially done while the author was visiting the Centre de Recerca Matem\`atica as a participant of the Conformal Geometry and Geometric PDE's Program supported by a grant of the Clay Mathematics Institute. He would like to acknowledge both institutions for the given opportunities. The author is also grateful to prof. Manuel Ritor\'e, whose lecture unexpectedly raised questions solved in the paper, and to prof. Alexander Borisenko for many fruitful discussions and remarks on the paper.
\end{ack}

\end{document}